\documentclass{amsart}
  \usepackage{amsthm,amsmath,amssymb,hyperref}
  \usepackage{graphicx,color,float}

  \newtheorem{theorem}{Theorem}[section]
  
  \newtheorem{lemma}[theorem]{Lemma}
  
  \theoremstyle{definition}

  \theoremstyle{remark}
  \newtheorem{remark}[theorem]{Remark}
  \numberwithin{equation}{section}

   \newcommand{\SL}{\operatorname{SL}}

   \newcommand{\bC}{{\mathbb C}}

   \newcommand{\bP}{{\mathbb P}}
   \newcommand{\bQ}{{\mathbb Q}}
   \newcommand{\bR}{{\mathbb R}}

   \newcommand{\bZ}{{\mathbb Z}}

   \begin{document}

   \title{BCOV rings on elliptic curves and Eta function}
   \author{So Okada}

   \address{National Institute of Technology, Oyama College. Tochigi, Japan
   323-0806}

   \email{okada@oyama-ct.ac.jp}

   \maketitle

   \begin{abstract} 
     Associated Legendre functions of the first kind give a family of
     BCOV rings on elliptic curves.  We prove that the family is
     parametrized by $q$-exponents of the eta function $\eta(q^{24})$.
     Our method involves a classification of rational solutions of a
     Riccati equation under some constraints.
  \end{abstract}

  \section{Introduction}\label{sec:first-intro}
  In this paper, we parametrize a family of BCOV rings on elliptic
  curves by the eta function.  As such, this paper can be seen as a
  step forward on understanding meromorphic ambiguity on BCOV theory
  \cite{BCOV} by a modular form.

  BCOV rings \cite{Hos} have been introduced to study {\it BCOV
    holomorphic anomaly equations} of Bershadsky, Cecotti, Ooguri and
  Vafa \cite{BCOV}.  BCOV theory has gained much interest in
  mathematics and physics \cite{YamYau,Ali,AliLan,KanZho}.

  A major challenge of BCOV theory is meromorphic ambiguity to compute
  Gromov-Witten potentials.  For this, let us take $\Gamma=\langle
  \left(\begin{smallmatrix} 1 & 0 \\ -1 & 1
    \end{smallmatrix}\right),
  \left(\begin{smallmatrix}
      1 & 1 \\
      0 & 1
    \end{smallmatrix}\right) \rangle\subset \SL_{2}(\bZ)$
  and recall a finitely-generated $\Gamma$-invariant BCOV ring
  ${\mathcal R}_{BCOV}^{\Gamma}$ on elliptic curves.  This ring is
  fundamental in BCOV rings.  To define each ${\mathcal
    R}_{BCOV}^{\Gamma}$, we need to choose $r(x)\in \bC(x)$ that
  solves the following Riccati equation. This choice corresponds to
  the meromorphic ambiguity of the BCOV theory. For $x\in \bP^{1}$,
  $\lambda\in \bQ$, and the {\it Griffith-Yukawa coupling}
  $C_{x}=\frac{1}{(1-432 x)x}$, there is the Riccati equation:
   \begin{equation}\label{eq:Hosono-Riccati}
     r'(x)+C_{x}r^{2}(x)-60=\lambda C_{x}.
  \end{equation}

  For Legendre associated functions of the first and second kinds
  $P_{\alpha}^{\beta}(x)$ and $Q_{\alpha}^{\beta}(x)$ and $C\in
  \bP^{1}$, Equation \ref{eq:Hosono-Riccati} admits the general
  solution:
  \begin{align*}
   r(x,\lambda,C)&=\frac{1}{12}\left(
   5+4320x + (-5 + 12 \sqrt{\lambda})  
   \frac{C P_{\frac{5}{6}}^{2\sqrt{\lambda}}(-1+864x)+ 
   Q_{\frac{5}{6}}^{2\sqrt{\lambda}}(-1+864x)}{
   C P_{-\frac{1}{6}}^{2 \sqrt{\lambda}}( -1 + 864 x) + 
   Q_{-\frac{1}{6}}^{2 \sqrt{\lambda}}(-1 + 864 x)}\right).
  \end{align*}
  By taking $C\to \infty$, set
  \begin{align*}
   r(x,\lambda)&=\frac{1}{12}\left(
   -5 + 4320 x + (-5 + 12 \sqrt{\lambda}) 
   \frac{P_{\frac{5}{6}}^{2 \sqrt{\lambda}}(-1 + 864 x)}{ P_{-\frac{1}{6}}^{2 \sqrt{\lambda}}(-1 + 864 x)}\right).
  \end{align*}
  
  Let $R_{\infty}$ be the family of ${\mathcal
    R}_{BCOV}^{\Gamma}$ for all $r(x,\lambda)\in \bC(x)$.  Let
  $\chi(n)$ be the Dirichlet character of mod 12 such that $\chi(\pm
  1)=1$ and $\chi(\pm 5)=-1$.  We prove the following.
  \begin{theorem}\label{thm}
    The family $R_{\infty}$ of finitely-generated
    $\Gamma$-invariant BCOV rings on elliptic curves 
    is parametrized
    by the $q$-exponents of the eta function
    $\eta(q^{24})=\sum_{n=1}^{\infty} \chi(n) q^{n^2}= q - q^{25} -
    q^{49} + q^{121} + q^{169}\cdots$. Namely, $r(x,\lambda)\in
    \bC(x)$ if and only if $144\lambda=1, 25, 49, 121, 169, \cdots$,
    squares of numbers prime to $6$.
  \end{theorem}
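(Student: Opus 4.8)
The plan is to reduce the rationality of $r(x,\lambda)$ to the classical criterion for a Gauss hypergeometric series to terminate, applied to the second-order linear ODE underlying the Riccati equation \eqref{eq:Hosono-Riccati}. Set $\mu=2\sqrt{\lambda}$ (we may assume $\mu\ge0$), so that $-5+12\sqrt\lambda=6\mu-5$ and $r(x,\lambda)=\tfrac1{12}(-5+4320x)+\tfrac{6\mu-5}{12}\,\rho(x)$ with $\rho(x)=P_{5/6}^{\mu}(-1+864x)\big/P_{-1/6}^{\mu}(-1+864x)$. If $\mu=\tfrac56$ (that is, $144\lambda=25$) then $r(x,\lambda)=\tfrac1{12}(-5+4320x)\in\bC(x)$ outright; otherwise $r(x,\lambda)\in\bC(x)$ if and only if $\rho\in\bC(x)$. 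Using the degree-shift relation for associated Legendre functions $(z^{2}-1)\,\partial_z P_{-1/6}^{\mu}(z)=(\tfrac56-\mu)\,P_{5/6}^{\mu}(z)-\tfrac56\,z\,P_{-1/6}^{\mu}(z)$ together with the identity $z^{2}-1=-1728\,x(1-432x)$ for $z=-1+864x$, one gets $\rho(x)=\tfrac1{\frac56-\mu}\bigl(\tfrac56(-1+864x)-2x(1-432x)\,\partial_x\log w(x)\bigr)$, where $w(x):=P_{-1/6}^{\mu}(-1+864x)$. Hence, for $\mu\ne\tfrac56$, $r(x,\lambda)\in\bC(x)$ if and only if $w$ has rational logarithmic derivative; this is the statement I would analyze.

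For sufficiency I would use the hypergeometric representation $P_{-1/6}^{\mu}(z)=\Gamma(1-\mu)^{-1}\bigl((z+1)/(z-1)\bigr)^{\mu/2}\GH(\tfrac16,\tfrac56;1-\mu;\tfrac{1-z}{2})$. At $z=-1+864x$ we have $\tfrac{1-z}{2}=1-432x$ and $(z+1)/(z-1)=432x/(432x-1)$, so $w(x)=\Gamma(1-\mu)^{-1}\bigl(432x/(432x-1)\bigr)^{\mu/2}G(x)$ with $G(x):=\GH(\tfrac16,\tfrac56;1-\mu;1-432x)$; the algebraic prefactor has rational logarithmic derivative $\tfrac{\mu}{2}\bigl(\tfrac1x-\tfrac{432}{432x-1}\bigr)$. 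By Euler's transformation $G(x)=(432x)^{-\mu}\,\GH(\tfrac56-\mu,\tfrac16-\mu;1-\mu;1-432x)$, and this series terminates — so that $G$, and hence $w$, is a power of $x$ times a power of $432x-1$ times a polynomial, with rational logarithmic derivative — precisely when $\mu-\tfrac56\in\bZ_{\ge0}$ or $\mu-\tfrac16\in\bZ_{\ge0}$, i.e.\ when $\mu\in\{\tfrac n6:n\in\bZ_{>0},\ \gcd(n,6)=1\}=\{\tfrac16,\tfrac56,\tfrac76,\tfrac{11}{6},\dots\}$.

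For necessity, that no other $\mu$ works, I would pass to the underlying Fuchsian equation, which also settles the degenerate parameter values left aside above. The substitution $r=w'/(C_xw)$ turns \eqref{eq:Hosono-Riccati} into a Fuchsian equation with regular singular points $0,\ \tfrac1{432},\ \infty$ and Riemann scheme having exponents $\pm\sqrt\lambda$ at $0$, $\pm\sqrt\lambda$ at $\tfrac1{432}$, and $\tfrac16,\tfrac56$ at $\infty$ — indeed it is the associated Legendre equation of degree $-\tfrac16$ and order $\mu$ in the variable $z=-1+864x$. A solution of a second-order Fuchsian equation has rational logarithmic derivative if and only if it is a product of powers of $x$ and $432x-1$ with a polynomial, which by a degree count at $\infty$ happens exactly when one local exponent can be chosen at each singular point with sum in $\bZ_{\le0}$; since the exponents at $0$ and $\tfrac1{432}$ together contribute an element of $\{0,\pm\mu\}$ while $\tfrac16,\tfrac56\notin\bZ$, this forces $\mu-\tfrac16\in\bZ_{\ge0}$ or $\mu-\tfrac56\in\bZ_{\ge0}$. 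In particular no such solution exists when $\mu$ is a positive integer or is irrational — which, recalling $\lambda\in\bQ$, covers precisely the cases where the hypergeometric representation of $P_{-1/6}^{\mu}$ degenerates. (More explicitly, the coefficient of $x^{\mu/2}$ in the local expansion of $w$ at $x=0$ is proportional to $\Gamma(-\mu)\big/\bigl(\Gamma(\tfrac56-\mu)\,\Gamma(\tfrac16-\mu)\bigr)$, and rationality of $w'/w$ forces this to vanish, which is the same condition.)

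Assembling the cases, $r(x,\lambda)\in\bC(x)$ if and only if $2\sqrt\lambda\in\{\tfrac n6:n\in\bZ_{>0},\ \gcd(n,6)=1\}$, i.e.\ $144\lambda=(12\sqrt\lambda)^2=n^2$ for a positive integer $n$ prime to $6$ (the value $144\lambda=25$ handled separately being $n=5$, which is consistent). Finally, $\chi(n)\ne0$ if and only if $\gcd(n,12)=1$, and since $\gcd(n,6)=1$ already forces $n$ odd, this is the same as $\gcd(n,6)=1$; hence $\{n^2:\gcd(n,6)=1\}$ is exactly the set of exponents occurring with nonzero coefficient in $\eta(q^{24})=\sum_{n\ge1}\chi(n)q^{n^2}=q-q^{25}-q^{49}+q^{121}+q^{169}+\cdots$, and $R_{\infty}$ is thereby parametrized by the $q$-exponents of $\eta(q^{24})$. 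The step I expect to be the main obstacle is necessity — the classification of rational solutions of \eqref{eq:Hosono-Riccati} under the relevant constraint, as promised in the abstract — where one must exclude every accidental rationalization of $\rho$, carefully tracking the local exponents and connection coefficients at $0,\tfrac1{432},\infty$ and handling the boundary and degenerate parameters ($\mu=\tfrac56$, $\mu\in\bZ_{>0}$, $\mu$ irrational) at which the hypergeometric description fails.
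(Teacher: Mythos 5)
Your proof is correct, but it takes a genuinely different route from the paper's in both directions. For sufficiency, the paper proves a contiguous-relation identity (Lemma \ref{lem:induction}) expressing $f(n,m+1,x)$ as a M\"obius transformation of $f(n,m,x)$ over $\bC(x)$ and inducts upward in the order from the two explicit seeds $f(-\tfrac16,\tfrac16,x)=x$ and $f(-\tfrac16,\tfrac{11}{6},x)=\tfrac1x$ (with $144\lambda=25$ handled trivially via $-5+12\sqrt{\lambda}=0$, exactly as you do); your Euler-transformation/terminating-series argument instead exhibits $P_{-1/6}^{\mu}(-1+864x)$, $\mu=2\sqrt{\lambda}$, in closed form as an algebraic prefactor times a polynomial. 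Both work; the paper's induction uses only elementary three-term recurrences, while yours buys explicit formulas for the resulting rational functions. For necessity, the paper's Lemma \ref{lem:numbers} reads off the coefficient of $x^{1/3}$ in the expansion of $r(x,\lambda)$ at $x=\infty$ and forces $1/\Gamma(\tfrac16-2\sqrt{\lambda})=0$ --- essentially the connection-coefficient condition you mention only in your final parenthesis --- whereas your main argument is the Kovacic-style classification of solutions with rational logarithmic derivative of the underlying Fuchsian equation by exponent bookkeeping at $0$, $\tfrac{1}{432}$, $\infty$. Your version is more systematic and treats the degenerate parameters ($\mu\in\bZ_{>0}$, $\mu$ irrational, and the branch $\tfrac56-2\sqrt{\lambda}\in\bZ_{\leq 0}$ that the paper's stated expansion explicitly excludes) uniformly. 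The one step you should not leave implicit is the standard fact underlying that classification: a nonzero solution of a Fuchsian equation whose logarithmic derivative is rational must be a product of powers of the singular linear factors with a polynomial (no exponential-of-a-rational factor, and only integer-order zeros at ordinary points); with that supplied, your argument is complete.
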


  \section{Proofs}
  To study when $r(x,\lambda)\in \bC(x)$, let us first consider
  $r(x,\lambda)$ at a fixed singularity of Equation
  \ref{eq:Hosono-Riccati}.

  \begin{lemma}\label{lem:numbers}
    If $r(x,\lambda)\in \bC(x)$, then
    $144\lambda=1, 25, 49,
    121, 169, \cdots$.
  \end{lemma}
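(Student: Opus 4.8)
The plan is to reduce rationality of $r(x,\lambda)$ to rationality of the logarithmic derivative of a single associated Legendre function, and then to read off the arithmetic constraint on $\lambda$ from the local exponents of the associated Legendre equation at its three regular singular points on $\bP^{1}$.

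Write $\mu=2\sqrt{\lambda}$ and $w=-1+864x$; since $w$ is an affine coordinate we have $\bC(x)=\bC(w)$. The summand $\tfrac{1}{12}(-5+4320x)$ of $r(x,\lambda)$ is rational, and the scalar $-5+12\sqrt{\lambda}$ vanishes only when $\sqrt{\lambda}=\tfrac{5}{12}$ — in which case $r(x,\lambda)=\tfrac{1}{12}(-5+4320x)$ is trivially rational and $144\lambda=25$ already lies on the asserted list. Away from that value, $r(x,\lambda)\in\bC(x)$ is equivalent to $P_{5/6}^{\mu}(w)/P_{-1/6}^{\mu}(w)\in\bC(w)$. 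First I would apply the derivative relation $(w^{2}-1)\,\tfrac{d}{dw}P_{\nu}^{\mu}(w)=(\nu-\mu+1)\,P_{\nu+1}^{\mu}(w)-(\nu+1)\,w\,P_{\nu}^{\mu}(w)$ with $\nu=-\tfrac{1}{6}$, so that $P_{\nu+1}^{\mu}=P_{5/6}^{\mu}$; dividing by $P_{-1/6}^{\mu}(w)$ exhibits $\tfrac{d}{dw}\log P_{-1/6}^{\mu}(w)$ as a $\bC(w)$-linear combination of $P_{5/6}^{\mu}(w)/P_{-1/6}^{\mu}(w)$ and $w/(w^{2}-1)$. Hence if $r(x,\lambda)\in\bC(x)$ then the solution $f:=P_{-1/6}^{\mu}$ of the associated Legendre equation of degree $-\tfrac{1}{6}$ and order $\mu$ has rational logarithmic derivative.

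Next I would exploit the global form of that Fuchsian equation: its only singular points are $w=1,-1,\infty$, all regular, with indicial roots $\pm\mu/2$ at $w=\pm1$, and with solutions of growth $w^{-1/6}$ or $w^{-5/6}$ at $w=\infty$ (these being the indicial roots $\nu$ and $-\nu-1$ there). A solution $f$ with $f'/f\in\bC(w)$ must have the form $f(w)=c\,(w-1)^{a}(w+1)^{b}\,p(w)$ with $c\in\bC^{\times}$, $p\in\bC[w]$, $p(\pm1)\neq0$, $\deg p\in\bZ_{\geq0}$: in the partial-fraction expansion of $f'/f$ the polynomial part must vanish (else $f$ has an irregular singularity at $\infty$), and at each finite point $\neq\pm1$ the residue must lie in $\bZ_{\geq0}$ (else $f$ has a pole or branch point at a regular point of the equation). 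Comparing exponents at $w=1,-1,\infty$ forces $a,b\in\{\mu/2,-\mu/2\}$ and $a+b+\deg p\in\{-\tfrac16,-\tfrac56\}$. Since $a+b\in\{\mu,0,-\mu\}$ and $\deg p\geq0$ is an integer, the case $a+b=0$ is impossible, so $a+b=\pm\mu$ and $\mu=\pm\bigl(\deg p+\tfrac16\bigr)$ or $\mu=\pm\bigl(\deg p+\tfrac56\bigr)$. Thus $\mu$ is real and nonzero (so $\lambda>0$), $6\mu$ is a positive integer prime to $6$, and $144\lambda=(12\sqrt{\lambda})^{2}=(6\mu)^{2}$ runs exactly through $1,25,49,121,169,\dots$.

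I expect the main obstacle to be the structural step of the last paragraph: justifying rigorously that a solution of the associated Legendre equation whose logarithmic derivative is rational is forced to be a constant times a product of powers of $w\mp1$ and a polynomial, and then determining precisely which indicial roots $a,b$ occur. The latter requires the known behaviour of $P_{-1/6}^{\mu}$ at $w=1$, a connection formula relating its behaviour at $w=1$ and $w=-1$, and its growth at $w=\infty$; only when these three local contributions are assembled does the constraint collapse to the statement that $\sqrt{144\lambda}=6\mu$ is an integer prime to $6$.
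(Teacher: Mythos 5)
Your proposal is correct in outline, but it takes a genuinely different route from the paper's. The paper works at the single point $x=\infty$: it writes down the Puiseux expansion of $r(x,\lambda)$ there, whose coefficient of the irrational power $x^{1/3}$ is an explicit ratio of Gamma values, and observes that rationality forces $1/\Gamma(\tfrac16-2\sqrt{\lambda})=0$, i.e.\ $\tfrac16-2\sqrt{\lambda}\in\bZ_{\leq 0}$; the degenerate case of that expansion, $\tfrac56-2\sqrt{\lambda}\in\bZ_{\leq 0}$, supplies the residues $\pm 5 \bmod 12$. You instead reduce to the statement that $P_{-1/6}^{\mu}$ has rational logarithmic derivative and run the standard exponential-solution analysis (the first case of Kovacic's algorithm) at all three regular singular points $w=\pm1,\infty$. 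Your structural step --- that $f'/f\in\bC(w)$ forces $f=c\,(w-1)^{a}(w+1)^{b}p(w)$ with $a,b$ indicial roots and $p$ a polynomial --- does require the moderate-growth argument to exclude higher-order poles of $f'/f$ at $w=\pm1,\infty$ and essential singularities at ordinary points, but it is standard; and you do not actually need the connection formulae you worry about at the end, since merely knowing $a,b\in\{\mu/2,-\mu/2\}$ already forces $6\mu\equiv\pm1\pmod 6$ in every surviving case (the sign ambiguity in $6\mu$ is harmless because only $(6\mu)^{2}=144\lambda$ matters). What your approach buys is independence from the explicit asymptotic constants, a uniform derivation of both residue classes mod $6$ in one computation, and applicability to other Riccati equations of hypergeometric type; what the paper's buys is brevity, at the cost of quoting the large-$x$ asymptotics of $P_{\nu}^{\mu}$ and splitting off the degenerate case in an `unless' clause.
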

  \begin{proof}
    At $x=\infty$, unless $\frac{5}{6}-2\sqrt{\lambda}\in \bZ_{\leq
      0}$, the Lawrent expansion of $r(x,\lambda)$ is $$ 72 x-\frac{6
      \cdot 2^{\frac{2}{3}} \cdot \sqrt{\pi} \cdot
      \Gamma(\frac{5}{6}-2 \sqrt{\lambda})}{\Gamma(\frac{1}{6})\cdot
      \Gamma(\frac{1}{6}-2 \sqrt{\lambda})} x^{\frac{1}{3}}-
    \frac{1}{12}+O\left(\frac{1}{x}\right)^{\frac{1}{6}} $$ Since
    $r(x,\lambda)\in \bC(x)$,
    $\frac{1}{\Gamma(\frac{1}{6}-2\sqrt{\lambda})}$ has to
    vanish. Thus, $\frac{1}{6}-2\sqrt{\lambda}\in \bZ_{\leq 0}$.
  \end{proof}

  For $n,m\in \bR$, let us study 
  $f(n,m,x)=\frac{P_{n+1}^{m}(x)}{P_{n}^{m}(x)}$ when $m$ increases.

     \begin{lemma}\label{lem:induction}
       If $n\neq m$, we have
       \begin{align*}
         x-f(n,m+1,x)
         =       \frac{
           (n+m+1)(1-x^{2})}{ (n-m+1)f(n,m,x) -(n+m+1)x}. 
       \end{align*}
     \end{lemma}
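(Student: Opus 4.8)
The plan is to prove this identity by relating $f(n,m+1,x)$ to $f(n,m,x)$ through the classical contiguous (recurrence) relations for associated Legendre functions of the first kind. The key observation is that $P_n^{m+1}$, $P_n^m$, and $P_n^{m-1}$ satisfy a three-term recurrence in the upper index (order), while $P_{n+1}^m$, $P_n^m$, and $P_{n-1}^m$ satisfy one in the lower index (degree); combining these will express the ratio $f(n,m+1,x) = P_{n+1}^{m+1}(x)/P_n^{m+1}(x)$ in terms of $f(n,m,x)$. Concretely, I would start from the standard relation
\begin{align*}
P_\nu^{\mu+1}(x) = \frac{(\nu-\mu)x}{\sqrt{x^2-1}}\,P_\nu^{\mu}(x) - \frac{(\nu+\mu)}{\sqrt{x^2-1}}\,P_{\nu-1}^{\mu}(x),
\end{align*}
applied once with $\nu=n+1$ and once with $\nu=n$, so that
\begin{align*}
f(n,m+1,x) = \frac{P_{n+1}^{m+1}(x)}{P_n^{m+1}(x)} = \frac{(n+1-m)x\,P_{n+1}^{m}(x) - (n+1+m)\,P_{n}^{m}(x)}{(n-m)x\,P_{n}^{m}(x) - (n+m)\,P_{n-1}^{m}(x)}.
\end{align*}

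Next I would eliminate $P_{n-1}^m$ from the denominator. For this I would use the lower-index recurrence
\begin{align*}
(n+1-m)P_{n+1}^{m}(x) = (2n+1)x\,P_n^{m}(x) - (n+m)P_{n-1}^{m}(x),
\end{align*}
so that $(n+m)P_{n-1}^{m}(x) = (2n+1)x\,P_n^m(x) - (n+1-m)P_{n+1}^m(x)$. Substituting this into the denominator above and dividing numerator and denominator by $P_n^m(x)$ turns everything into a rational expression in $f(n,m,x)=P_{n+1}^m(x)/P_n^m(x)$ and $x$. After collecting terms, the numerator becomes $(n+1-m)x\,f(n,m,x) - (n+1+m)$ and the denominator becomes $(n-m)x - (2n+1)x + (n+1-m)f(n,m,x) = (n+1-m)f(n,m,x) - (n+1+m)x$, giving a clean Möbius-type formula $f(n,m+1,x) = \frac{(n+1-m)x\,f(n,m,x) - (n+1+m)}{(n+1-m)f(n,m,x) - (n+1+m)x}$. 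It then remains to compute $x - f(n,m+1,x)$ and simplify: the numerator of $x - f(n,m+1,x)$ is $x\bigl[(n+1-m)f - (n+1+m)x\bigr] - \bigl[(n+1-m)xf - (n+1+m)\bigr] = (n+1+m)(1-x^2)$, which matches the claimed right-hand side once we note the denominator is $(n+1-m)f(n,m,x) - (n+1+m)x$, i.e.\ $(n-m+1)f(n,m,x) - (n+m+1)x$ after renaming.

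The only real subtlety — and the step I expect to need the most care — is bookkeeping with the precise normalization constants in the contiguous relations, since different sources normalize $P_\nu^\mu$ differently (factors of $(-1)^\mu$, $\Gamma$-quotients, and the choice of branch of $\sqrt{x^2-1}$ versus $\sqrt{1-x^2}$). I would fix one convention consistently, check the two recurrences I use against it for small integer values of $\mu$, and verify that all $\sqrt{x^2-1}$ factors cancel in the final ratio (as they must, since the statement is rational in $x$). The hypothesis $n \neq m$ is exactly what guarantees the coefficient $(n-m+1)$ — wait, rather it ensures the leading coefficients $(n-m+1)$ and $(n+1-m)$ appearing do not both degenerate simultaneously with $(n+m+1)$, so that the Möbius transformation is non-degenerate and the division by $P_n^{m+1}(x)$ (equivalently, $P_n^m(x)$) is legitimate; I would note where this is used. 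Once the formula is established for a Zariski-dense set of parameters it extends by analytic continuation in $n,m$, which is why stating it for all real $n,m$ with $n\neq m$ is harmless.
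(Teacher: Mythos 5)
Your proposal is correct, and it reaches the identity by a genuinely different route than the paper. The paper works entirely with two of the DLMF contiguous relations it cites as 14.10.1 and 14.10.2: it uses the three-term recurrence in the \emph{order} $m$ (connecting $P_n^{m+2}, P_n^{m+1}, P_n^m$) together with the mixed relation connecting $P_n^{m+1}, P_{n+1}^m, P_n^m$, eliminates $P_n^{m+2}$, and only at the very last step divides by $(n-m)$ --- which is precisely where the hypothesis $n\neq m$ is consumed. You instead combine the contiguous relation expressing $P_\nu^{\mu+1}$ through $P_\nu^\mu$ and $P_{\nu-1}^\mu$ (applied at $\nu=n+1$ and $\nu=n$) with the three-term recurrence in the \emph{degree} $\nu$ (DLMF 14.10.3) to eliminate $P_{n-1}^m$. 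Your algebra checks out: the denominator collapses to $(n-m+1)P_{n+1}^m-(n+m+1)xP_n^m$, you obtain the explicit M\"obius law $f(n,m+1,x)=\frac{(n-m+1)xf-(n+m+1)}{(n-m+1)f-(n+m+1)x}$, and $x-f(n,m+1,x)$ simplifies to the claimed right-hand side. What your route buys is that the full fractional-linear recursion is displayed (the paper only extracts $x-f$), and that no division by $(n-m)$ ever occurs, so the hypothesis $n\neq m$ is not actually needed in your argument --- your own attempt to explain where $n\neq m$ enters is muddled, but that is because in your derivation it genuinely plays no role; in the paper's derivation it is indispensable. The one point you rightly flag as needing care, the normalization and branch conventions in the contiguous relations ($(1-x^2)^{1/2}$ versus $(x^2-1)^{1/2}$), is real but benign here: the final identity is rational in $x$ and all square roots cancel, and indeed the paper itself mixes the two conventions in its statement of 14.10.1. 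Checking your two relations against a fixed convention (e.g.\ on small integer orders, as you propose) closes that gap.
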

     \begin{proof}

   Recall the three-term recurrences \cite[14.10.1,14.10.2]{DLMF}:
   \begin{align}
     (1-x^{2})^{\frac{1}{2}}P_{n}^{m+2}(x)+2(m+1)xP_{n}^{m+1}(x)
       =-(n-m)(n+m+1)(x^{2}-1)^{\frac{1}{2}}P_{n}^{m}(x), \label{eq6}\\
     (1-x^{2})^{\frac{1}{2}}P_{n}^{m+1}(x)-(n-m+1)P_{n+1}^{m}(x)
     =-(n+m+1)xP_{n}^{m}(x). \label{eq2}
   \end{align}
     
     By Equation \ref{eq2},  put
     \begin{align}
      (1-x^{2})^{\frac{1}{2}}P_{n}^{m+2}(x)-(n-m)P_{n+1}^{m+1}(x)
     +(n+m+2)xP_{n}^{m+1}(x)&=0. \label{eq3}
   \end{align}
   Let $F(n,m,x)=(n-m+1)f(x) -(n+m+1)x$. Then,
   by $P_{n+1}^{m}(x)=P_{n}^{m}(x)f(n,m,x)$ and Equation \ref{eq2},
   \begin{align}
     P_{n}^{m+1}(x)=
     (1-x^{2})^{-\frac{1}{2}} P_{n}^{m}(x)F(n,m,x). \label{eq4}
     \end{align}

     By Equations \ref{eq3} and \ref{eq4},
     \begin{equation}
       (1-x^{2})^{\frac{1}{2}}P_{n}^{m+2}(x)-(n-m)P_{n+1}^{m+1}(x) 
       =-(n+m+2)x     (1-x^{2})^{-\frac{1}{2}} P_{n}^{m}(x) F(n,m,x)
       \label{eq5}
     \end{equation}

     Thus, subtracting Equation \ref{eq6} from Equation \ref{eq5}
     gives
     \begin{align*}
       -2(m+1)xP_{n}^{m+1}(x)-(n-m)P_{n+1}^{m+1}(x)
       &= \\
       P_{n}^{m}(x)   (
       -(n+m+2)x     (1-x^{2})^{-\frac{1}{2}} F(n,m,x)
       +(n-m)(n+m+1)(1-x^{2})^{\frac{1}{2}})
       \end{align*}
       Thus,
       \begin{align*}
         -2(m+1)x-(n-m)\frac{P_{n+1}^{m+1}(x)}{P_{n}^{m+1}(x)}
       = \\
       -2(m+1)x-(n-m)f(n,m+1,x)
      =\\
       \frac{P_{n}^{m}(x)   }{P_{n}^{m+1}(x)}
       (
       -(n+m+2)x     (1-x^{2})^{-\frac{1}{2}} F(n,m,x)
       +(n-m)(n+m+1)(1-x^{2})^{\frac{1}{2}})
       \end{align*}
       Again, by Equation \ref{eq4},
       \begin{align*}
         -2(m+1)x-(n-m)f(n,m+1,x)
       = \\
       \frac{P_{n}^{m}(x)   
         (
       -(n+m+2)x     (1-x^{2})^{-\frac{1}{2}} F(n,m,x)
       +(n-m)(n+m+1)(1-x^{2})^{\frac{1}{2}})}{(x^{2}-1)^{-\frac{1}{2}} P_{n}^{m}(x)
       F(n,m,x)}
       = \\
       \frac{
            -(n+m+2)x   F(n,m,x)
         +(n-m)(n+m+1)(1-x^{2})}{ F(n,m,x)}     =\\
       -(n+m+2)x   
       + \frac{(n-m)(n+m+1)(1-x^{2})}{F(n,m,x)}.
     \end{align*}
     Thus, the lemma holds.
\end{proof}

Let us prove Theorem \ref{thm}.

\begin{proof}
  We confirm the converse of Lemma \ref{lem:numbers}.  Since
  $f(-\frac{1}{6},\frac{1}{6},x)=x$,
  $r(x,\frac{1}{144})=-\frac{1}{12}+72x\in \bC(x)$. Thus, by Lemma
  \ref{lem:induction}, $r(x,\frac{i^2}{144})\in \bC(x)$ for
  $i=1,7,13,\cdots$.  If $\lambda=\frac{5^2}{144}$, since $-5+12
  \sqrt{\lambda}=0$, $r(x,\lambda)=-\frac{5}{12} + 360 x\in \bC(x)$.
  For $\lambda=\frac{i^2}{144}$ of $i=11,17,23,\cdots$,
  $f(-\frac{1}{6},\frac{11}{6},x)=\frac{1}{x}$ implies
  $r(x,\lambda)\in \bC(x)$ by Lemma \ref{lem:induction}.  Thus, the
  assertion holds.
\end{proof}

\begin{remark}
  By Lemma \ref{lem:numbers}, we do not have to assume $\lambda\in
  \bQ$ to define ${\mathcal R}_{BCOV}^{\Gamma}$ for $r(x,\lambda)$.
  By the proof of the theorem and Lemma \ref{lem:induction}, we
  observe that $r(x,\lambda)\in \bC(x)$ implies $r(x,\lambda)\in
  \bQ(x)$.  Lemma \ref{lem:induction} holds for associated Legendre
  functions of the second kind. But, $r(x,\frac{1}{144},C)\not\in
  \bC(x)$ unless $C\to \infty$, since the Lawrent expansion of
  $r(x,\frac{1}{144},C)$ at $x=\infty$ is $72 x+\frac{4
    (-2)^{\frac{2}{3}}\Gamma(-\frac{4}{3})\Gamma(\frac{5}{6})}{\sqrt{3
      \pi } (2 i
    C+\pi)}x^{\frac{1}{3}}-\frac{1}{12}+O\left(\frac{1}{x}\right)^{\frac{1}{4}}$.
\end{remark}

\section*{Acknowledgments}
The author would like to thank Professors S. Hosono and Y. Ohyama for
their helpful communications.

\bibliographystyle{amsalpha}

\end{document}